\theoremstyle{plain}
\newtheorem{theorem}{Theorem}[section]
\newtheorem{lemma}[theorem]{Lemma}
\newtheorem{corollary}[theorem]{Corollary}
\theoremstyle{definition}
\newtheorem{definition}[theorem]{Definition}
\newtheorem{remark}[theorem]{Remark}
\newtheorem{notation}[theorem]{Notation}
\newcommand{\all}{\hbox{for all}}
\newcommand{\All}{\hbox{For all}}
\newcommand{\funder}{\underline f}
\newcommand{\gr}{\hbox{\rm graph}}
\newcommand{\hyp}{\hbox{\rm hyp}}
\newcommand{\lr}{\Longrightarrow}
\newcommand{\qlr}{\quad\Longrightarrow\quad}
\newcommand{\rl}{\Longleftarrow}
\newcommand{\half}{{\textstyle\frac{1}{2}}}
\newcommand{\infn}{\inf\nolimits}
\newcommand{\limn}{\lim\nolimits}
\newcommand{\RR}{\mathbb R}
\newcommand{\tsum}{\textstyle\sum}
\newcommand{\Cor}{Corollary~\ref}
\newcommand{\Cors}{Corollaries~\ref}
\newcommand{\Lem}{Lemma~\ref}
\newcommand{\Rem}{Remark~\ref}
\newcommand{\Sec}{Section~\ref}
\newcommand{\Thm}{Theorem~\ref}
\title{Bootstrapping the Mazur--Orlicz--K\"onig theorem}
\author{Stephen Simons
\thanks{Department of Mathematics, University of California, Santa Barbara, CA\ 93106-3080, U.S.A. Email: \texttt{stesim38@gmail.com}.}}
\date{}
\begin{document}

\maketitle
\begin{abstract}\noindent
In this paper, we give some extensions of K\"onig's extension of the Mazur--Orlicz theorem.   These extensions include generalizations of a surprising recent result of Sun Chuanfeng, and generalizations to the product of more than two spaces of the ``Hahn--Banach--Lagrange'' theorem.  
\end{abstract}

{\small \noindent {\bfseries 2010 Mathematics Subject Classification:}
{Primary 46A22, 46N10.}}

\noindent {\bfseries Keywords:} Sublinear functional, convex function, affine function, Hahn--Banach theorem, Mazur--Orlicz--K\"onig theorem.

\section{Introduction}\label{Introduction}
In this paper, all vector spaces are {\em real}.   We shall use the terms {\em sublinear}, {\em linear}, {\em convex}, {\em concave} and {\em affine} in their usual senses.
\par
This paper is about extensions of the Mazur--Orlicz theorem, which first appeared in \cite{MOT}: {\em Let $E$ be a  vector space, $S\colon\ E \to \RR$ be sublinear and $C$ be a nonempty convex subset of $E$.   Then there exists a linear map $L\colon\ E \to \RR$ such that $L \le S$ on $E$ and $\infn_CL = \infn_CS.$}   Early improvements and applications of this result were given, in chronological order, by Sikorski \cite{SIK}, Pt\'ak \cite{PTAK}, K\"onig \cite{KONIGMOT}, Landsberg--Schirotzek \cite{LS} and K\"onig \cite{KONIG}.
\par
By a {\em convex--affine version} of a known result we mean that it corresponds to the known result with the word {\em sublinear} in the hypothesis replaced by {\em convex} and the word {\em linear} in the conclusion replaced by {\em affine}.   It is important to note that a convex--affine version of a known result is not necessarily a {\em generalization} of it because an affine function dominated by a sublinear functional is not necessarily linear.     
Recently, Sun Chuanfeng established the following convex--affine version of the Mazur--Orlicz theorem: {\em Let $E$ be a  vector space, $f\colon\ E \to \RR$ be convex and $C$ be a nonempty convex subset of $E$.   Then there exists an affine map $A\colon\ E \to \RR$ such that $A \le f$ on $E$ and $\infn_CA = \infn_Cf.$}  This seems to be a more difficult result that the original Mazur--Orlicz theorem.
\par
The ``Hahn--Banach--Lagrange'' theorem, an existence theorem for linear functionals on a vector space that generalizes the Mazur--Orlicz theorem, first appeared in \cite{NEWHBT}, and the analysis was refined in \cite{HBL} and \cite{HBM}.    The idea behind this result is to provide a unified and relatively nontechnical framework for treating the main existence theorems for continuous linear functionals in linear and nonlinear functional analysis, convex analysis, Lagrange multiplier theory and minimax theory.   Applications were also given to the theory of monotone multifunctions.   In many cases, the Hahn--Banach--Lagrange Theorem leads to necessary and sufficient conditions instead of the more usual known sufficient conditions for the existence of these functionals, and also leads to sharp\break numerical bounds for the norm of the functional obtained.   We will give an analysis of the Hahn--Banach--Lagrange theorem in \Sec{HBLsec}.
\par
We now give a short outline of the analysis in this paper.   \Sec{MOKsec} contains the generalization due to K\"onig of the Mazur--Orlicz theorem, which has the advantage that effort does not have to be expended to prove that certain sets are convex.   The proof is similar to that of the original Mazur--Orlicz theorem, but somewhat more technical.
\par
If $E$ is a vector space and $f\colon\ E \to \RR$ is convex, we use $f$ to define implicitly a sublinear function $S_f\colon E \times \RR \to [\,0,\infty[\,$, to which we will apply the results of \Sec{MOKsec}.   $S_f$ is defined in \Lem{SFlem}, and its properties are explored in \Lem{SFPROPSlem}.   The main result of this section, \Thm{AFFthm}, gives a method for the construction of affine functions.   
\par
In \Sec{SUNsec}, we discuss the result of Sun Chuanfeng's mentioned above.   \Thm{SUNthm} contains a generalization of this result in which the convex subset is replaced by any subset $Z$ satisfying Eqn. \eqref{SUN1}.   Sun Chuanfeng's original result is established in \Cor{SUNcor}.
\par
The original Hahn--Banach--Lagrange theorem used a convex set, $C$, a sublinear functional $S$, and two functions, $j$ and $k$.   See \Cor{HBLcor} for the simplest formulation of this kind of result.  \Cor{HBL} contains a version in which the convex set $C$ is replaces by a set $Z$ with no algebraic structure.   These results are discussed here as consequences of \Cor{ndimaffine} and \Thm{ndim}, which are results on $n\ (\ge 2)$ vector spaces instead of just 2.   \Thm{ndim} is obtained by a very simple bootstrapping procedure from \Lem{MOKlem}.
\par
The question now arises whether there are convex-affine results in the spirit of Sun Chuanfeng's theorem that are similar to \Cor{HBL} and \Cor{HBLcor}.   We give two such results, in \Thm{CAHBLthm} and \Cor{CAHBLcor}.   It would be nice if there were a result analogous to \Thm{CAHBLthm} or \Cor{CAHBLcor} for $n\ (\ge 3)$ vector spaces.   We explain in \Rem{NOGENrem} why we think that this is unlikely.
\par
The author would like to thank Sun Chuanfeng for sending him a preprint of \cite{SUN}. 
\section{On the existence of linear functionals}\label{MOKsec}
\begin{lemma}[Mazur--Orlicz--K\"onig theorem]\label{MOKlem}
Let $E$ be a nonzero vector space, $S\colon\ E \to \RR$ be sublinear and $D$ be a nonempty subset of $E$ such that
$$\all\ d_1,d_2 \in D,\hbox{ there exists }d \in D\hbox{ such that }S\big(d - \half d_1 - \half d_2\big) \le 0.$$
Then there exists a linear map $L\colon\ E \to \RR$ such that $L \le S$ on $E$ and
$$\infn_DL = \infn_DS.$$
\end{lemma}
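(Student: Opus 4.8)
The plan is to put $\alpha:=\infn_D S$ and obtain $L$ by applying the classical Hahn--Banach theorem to a sublinear functional $P$ manufactured from $S$, $D$, and $\alpha$. First I would dispose of the case $\alpha=-\infty$: the Hahn--Banach theorem applied to the zero functional on $\{0\}$ (legitimate since $S(0)=0$) already yields a linear $L\le S$ on $E$, and then $\infn_D L\le\infn_D S=-\infty$, so $\infn_D L=\infn_D S$. Henceforth assume $\alpha\in\RR$. Observe that it is enough to produce a linear $L\le S$ with $L\ge\alpha$ on $D$, since $L\le S$ forces $\infn_D L\le\infn_D S=\alpha$, whence equality.

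The first substantive step is to boost the two-point hypothesis to all dyadic convex combinations: for every $d_1,\dots,d_n\in D$ and all dyadic rationals $t_1,\dots,t_n\ge 0$ with $t_1+\dots+t_n=1$, there is some $d\in D$ with $S\big(d-\sum_i t_i d_i\big)\le 0$. I would prove this by a double induction --- first on the depth of the dyadic weights in the two-point case (expressing a new weight as the average of two weights of smaller depth and merging the two associated members of $D$ via the hypothesis), then on $n$ (splitting off one point and rescaling). Each merging step is routine given subadditivity, positive homogeneity, and $S(0)=0$.

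Next --- and I expect this to be the main obstacle --- I would show that $S\ge\alpha$ on the full, real-coefficient convex hull $\mathrm{co}(D)$. It suffices to consider $\mathrm{co}(\{d_1,\dots,d_n\})$ for a fixed finite subset of $D$. The map $(t_1,\dots,t_n)\mapsto S\big(\sum_i t_i d_i\big)$ on the standard simplex is the composite of a linear map into the finite-dimensional space $\mathrm{span}(d_1,\dots,d_n)$ with the restriction of $S$ to that space; the latter is a finite-valued convex function on a finite-dimensional space and hence continuous. By the dyadic step this map is $\ge\alpha$ at every dyadic point of the simplex (because $S\big(\sum_i t_i d_i\big)\ge S(d)-S\big(d-\sum_i t_i d_i\big)\ge\alpha$ for the corresponding $d\in D$), and dyadic points are dense in the simplex, so the map is $\ge\alpha$ throughout. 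Thus $S\ge\alpha$ on $\mathrm{co}(D)$.

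Finally I would set $P(x):=\inf\{\,S(x+\lambda c)-\lambda\alpha:\lambda\ge 0,\ c\in\mathrm{co}(D)\,\}$, where $\lambda=0$ (with $c$ irrelevant) shows $P\le S$. Since $\mathrm{co}(D)$ is convex, the standard Mazur--Orlicz computation shows $P$ is sublinear (positive homogeneity by rescaling $\lambda$, subadditivity by combining two members of the convex set $\mathrm{co}(D)$); taking $\lambda=1$ and $c=d\in D$ gives $P(-d)\le S(0)-\alpha=-\alpha$; and the previous step gives $P(0)=0$ (each defining term $\lambda\big(S(c)-\alpha\big)$ is $\ge 0$), whence $P(x)\ge P(0)-P(-x)\ge -S(-x)>-\infty$, so $P$ is a finite sublinear functional. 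The Hahn--Banach theorem now supplies a linear $L\le P$; then $L\le S$, and for each $d\in D$ we have $L(d)=-L(-d)\ge -P(-d)\ge\alpha$, so $\infn_D L\ge\alpha=\infn_D S$, while $L\le S$ gives $\infn_D L\le\infn_D S$. Hence $\infn_D L=\infn_D S$, as required.
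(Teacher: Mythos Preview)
The paper does not prove this lemma at all; it simply cites K\"onig's original article, so there is nothing in the paper to compare your argument against. What you have written is essentially a self-contained reconstruction of the classical K\"onig proof (build up from the midpoint hypothesis to dyadic convex combinations, pass to the full convex hull via continuity of $S$ on finite-dimensional subspaces, then push $S$ down along the cone over $\mathrm{co}(D)$ and apply Hahn--Banach), and the overall strategy is sound.

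Two small points of exposition are worth tightening. First, your ``induction on $n$ by splitting off one point and rescaling'' does not preserve dyadicity of the weights: if $t_1=\tfrac14$, $t_2=\tfrac12$, $t_3=\tfrac14$ and you split off $d_3$, the rescaled weights become $\tfrac13,\tfrac23$. The clean way is to observe that any dyadic convex combination $\sum_i t_i d_i$ with $t_i=k_i/2^m$ is an equal-weight average $\tfrac{1}{2^m}\sum_{j=1}^{2^m}e_j$ of $2^m$ points of $D$ (with repetitions), and then do a single induction on $m$, pairing the $2^{m+1}$ points into two blocks of $2^m$ and merging via the hypothesis. Second, your finiteness argument for $P$ invokes the subadditivity inequality $P(0)\le P(x)+P(-x)$ before you know $P$ is real-valued, which is delicate in extended arithmetic; it is cleaner to estimate directly: for any $\lambda\ge 0$ and $c\in\mathrm{co}(D)$, subadditivity of $S$ gives $S(x+\lambda c)\ge S(\lambda c)-S(-x)=\lambda S(c)-S(-x)\ge \lambda\alpha-S(-x)$, so every term defining $P(x)$ is at least $-S(-x)$. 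With these two fixes your proof goes through.
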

\begin{proof}
See K\"onig, \cite[Basic Theorem, p.\ 583]{KONIGMOT}.
\end{proof}
\section{An implicitly defined sublinear functional}\label{Implicitly}
\begin{notation}\label{SFnot}
We introduce some notation to simplify the expressions in what follows.   We suppose that $E$ is a  vector space and $f\colon\ E \to \RR$ is convex.   Let $\funder:= f - f(0) - 1$, so that
\begin{equation}\label{not1}
\funder\hbox{ is convex and }\funder(0) = -1.
\end{equation}
If $x \in E$, let $f_x\colon\ \,]0,\infty[\,\to \RR$ be defined by $f_x(\mu) := \mu \funder(x/\mu)$.   We know from \cite[Theorem~2.1.5(v), pp.\ 46--49]{ZBOOK}  that $\funder$ is continuous on any one--dimensional subspace of $E$, and so $f_x$ is continuous.   If $0 < \mu < \nu$ then $0 < \mu/\nu < 1$, and so   
\begin{align*}
\funder(x/\nu) &=  \funder\big((1 - \mu/\nu)0 + (\mu/\nu)(x/\mu)\big)\\
&\le (1 - \mu/\nu)(-1) + (\mu/\nu)\funder(x/\mu) = \mu/\nu - 1 + (\mu/\nu)\funder(x/\mu).
\end{align*}
Multiplying by $\nu$, we see that
\begin{equation}\label{not4}
0 < \mu < \nu \qlr f_x(\nu) + \nu \le f_x(\mu) + \mu. 
\end{equation}
Consequently,   
\begin{equation}\label{not3}
f_x\hbox{ is continuous and strictly decreasing, and }\limn_{\nu \to \infty}f_x(\nu) = -\infty.
\end{equation}   
\end{notation}
\begin{lemma}\label{SFlem}
Let $(x,\alpha) \in E \times \RR$ and $I(x,\alpha) := \{\mu > 0\colon\ f_x(\mu) < \alpha\}$.  Then:
\par\noindent
{\rm(a)} $I(x,\alpha)$ is a semi--infinite open subinterval of $\,]0,\infty[\,$.
\par\noindent
{\rm(b)} If $\inf I(x,\alpha) > 0$ then $\inf I(x,\alpha)$ is the unique value of $\sigma$ with $f_x(\sigma) = \alpha$.
\par\noindent
{\rm(c)} We define the function $S_f\colon E \times \RR \to [\,0,\infty[\,$ by $S_f(x,\alpha) := \inf I(x,\alpha)$.   Then
\begin{equation}\label{Negative}
S_f(x,\alpha) \le 0\iff\all\ \rho \ge 0,\ f(\rho x) - \alpha\rho \le f(0).
\end{equation}
If $S_f(x,\alpha) > 0$ then $S_f(x,\alpha)$ is uniquely determined by the implicit equality
\begin{equation}\label{Implicit}
S_f(x,\alpha)\funder\big(x/S_f(x,\alpha)\big) = \alpha\hbox{\rm\ \big(or equivalently, }(x,\alpha)/S_f(x,\alpha) \in \gr \funder\big).
\end{equation}
\end{lemma}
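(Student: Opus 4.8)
The plan is to lean on the three facts packaged in \eqref{not3}---that $f_x$ is continuous, strictly decreasing, and satisfies $\limn_{\nu\to\infty}f_x(\nu)=-\infty$---and to pass between statements about $f_x$ and statements about $f$ via the reciprocal substitution $\rho=1/\mu$.

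For (a), I would note that $I(x,\alpha)$, being the set where the continuous function $f_x$ is strictly below $\alpha$, is open; that it is nonempty because $f_x(\nu)\to-\infty$; and that it is upward closed because $f_x$ is strictly decreasing (if $\mu\in I(x,\alpha)$ and $\mu'>\mu$, then $f_x(\mu')<f_x(\mu)<\alpha$, so $\mu'\in I(x,\alpha)$). A nonempty, open, upward-closed subset of $\,]0,\infty[\,$ is exactly an interval of the form $\,]\inf I(x,\alpha),\infty[\,$ with $\inf I(x,\alpha)\in[\,0,\infty[\,$; this simultaneously shows that $S_f$ is well defined with range in $[\,0,\infty[\,$, as asserted in (c). For (b), writing $\sigma:=\inf I(x,\alpha)>0$: openness of the interval forces $\sigma\notin I(x,\alpha)$, hence $f_x(\sigma)\ge\alpha$, while letting $\mu\downarrow\sigma$ through $I(x,\alpha)$ and invoking continuity of $f_x$ gives $f_x(\sigma)\le\alpha$; thus $f_x(\sigma)=\alpha$, and strict monotonicity of $f_x$ makes $\sigma$ the unique solution of this equation.

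For (c), the implicit characterization \eqref{Implicit} in the case $S_f(x,\alpha)>0$ falls straight out of (b): unwind $f_x(\sigma)=\sigma\funder(x/\sigma)$ with $\sigma=S_f(x,\alpha)$, and divide by $\sigma$; the parenthetical reformulation is just the meaning of $\gr\funder$ together with $(x,\alpha)/\sigma=(x/\sigma,\alpha/\sigma)$. The substantive part is \eqref{Negative}. Since (a) gives $I(x,\alpha)=\,]S_f(x,\alpha),\infty[\,$, we have $S_f(x,\alpha)\le 0$ precisely when $I(x,\alpha)=\,]0,\infty[\,$, that is, when $f_x(\mu)<\alpha$ for every $\mu>0$. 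Substituting $\mu=1/\rho$ and using $f_x(1/\rho)=\rho^{-1}\funder(\rho x)=\rho^{-1}\big(f(\rho x)-f(0)-1\big)$, this is equivalent to $f(\rho x)-\alpha\rho<f(0)+1$ for every $\rho>0$. To conclude, I would put $g(\rho):=f(\rho x)-\alpha\rho$, observe that $g$ is convex with $g(0)=f(0)$, and check that this displayed condition is equivalent to $g(\rho)\le f(0)$ for all $\rho\ge 0$: the implication $\Leftarrow$ is immediate, and for $\Rightarrow$, if $g(\rho_0)>f(0)$ for some $\rho_0>0$, then convexity forces $g(t\rho_0)\ge g(0)+t\big(g(\rho_0)-g(0)\big)\to+\infty$ as $t\to\infty$, contradicting the bound $f(0)+1$.

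I expect the only genuinely delicate point to be this last recession argument inside \eqref{Negative}: one must upgrade a strict inequality valid merely on $\,]0,\infty[\,$ to a non-strict inequality valid on all of $[\,0,\infty[\,$, and it is precisely the unit of slack between the constants $f(0)+1$ and $f(0)$ that lets the convexity estimate bite. Everything else reduces to bookkeeping with \eqref{not3} and the reciprocal substitution.
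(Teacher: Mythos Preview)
Your proof is correct. Parts (a), (b) and the implicit characterization \eqref{Implicit} match the paper's argument closely (the paper phrases (b) by first locating the unique solution $\sigma$ of $f_x(\sigma)=\alpha$ via the intermediate value theorem and then checking $\sigma=\inf I(x,\alpha)$, which is your argument run in reverse). The one substantive difference is in the ``$\Longrightarrow$'' direction of \eqref{Negative}. The paper translates both sides into statements about $f_x$---the left-hand side becomes ``$f_x(\mu)<\alpha$ for all $\mu>0$'' and the right-hand side becomes ``$f_x(\nu)+\nu\le\alpha$ for all $\nu>0$''---and then applies the ready-made estimate \eqref{not4}, namely $f_x(\nu)+\nu\le f_x(\mu)+\mu$ for $0<\mu<\nu$, followed by $\mu\to 0$. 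Your recession argument for the convex function $g(\rho)=f(\rho x)-\alpha\rho$ is a legitimate alternative that stays in the $\rho$-variable and does not invoke \eqref{not4}; it is really the same convexity estimate in different clothing, and the unit of slack between $f(0)$ and $f(0)+1$ that you correctly flagged as the crux plays exactly the role that the ``$+\nu$'' term plays in the paper's version.
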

\begin{proof}
(a)\enspace This follows from \eqref{not3}.
\par
(b)\enspace By hypothesis, there exists $\nu$ such that $0 < \nu < \inf I(x,\alpha)$, from which $f_x(\nu) \ge \alpha$.   From the intermediate value theorem and \eqref{not3}, there exists a unique $\sigma > 0$ such that $f_x(\sigma) = \alpha$.   Now if $\mu \in I(x,\alpha)$ then $f_x(\mu) < \alpha = f_x(\sigma)$, and so $\mu > \sigma$.   Consequently, $\inf I(x,\alpha) \ge \sigma$.   On the other hand, for all $n \ge 1$, $\sigma + 1/n > \sigma$, hence $f_x(\sigma + 1/n) < f_x(\sigma) = \alpha$, and so $\sigma + 1/n \in I(x,\alpha)$.   Consequently, $\inf I(x,\alpha) \le \sigma$.   Thus $\inf I(x,\alpha) = \sigma$, as required.
\par
(c)\enspace We now establish \eqref{Negative}.   Since $f(0x) - \alpha0 = f(0)$, putting $\nu = 1/\rho$,
\begin{gather*}
\all\ \rho \ge 0,\ f(\rho x) - \alpha\rho \le f(0) \iff \all\ \rho > 0,\ f(\rho x) - \alpha\rho \le f(0)\\
\iff \all\ \nu > 0,\ f(x/\nu) - \alpha/\nu \le f(0) \iff  \all\ \nu > 0,\ f_x(\nu) + \nu \le \alpha.\\
\hbox{and, further,}\\
S_f(x,\alpha) \le 0 \iff \all\ \mu > 0,\ \mu \in I(x,\alpha)
\iff \all\ \mu > 0,\ f_x(\mu) < \alpha.
\end{gather*}
It is clear by comparing the two sets of implications above that ``$\rl$'' in \eqref{Negative} is satisfied.   If, on the other hand, for all $\mu > 0$, $f_x(\mu) < \alpha$ and $\nu > 0$, we take\break $0 < \mu < \nu$.   From \eqref{not4}, $f_x(\nu) + \nu \le f_x(\mu) + \mu < \alpha + \mu$.   Letting $\mu \to 0$, we see that $f_x(\nu) + \nu \le \alpha$, and so ``$\lr$'' in \eqref{Negative} is also satisfied.   This gives \eqref{Negative}, and \eqref{Implicit} is is immediate from (b).   
\end{proof}
\begin{definition}
If $\alpha \in \RR$ then $\alpha^-$ is the ``negative part'' of $\alpha$, that is to say, $\alpha^- = \max(-\alpha,0)$.   We write $\hyp\,\funder$ for the ``hypograph'' of $\funder$, that is to say the set $\big\{(x,\alpha) \in E \times \RR\colon\ \funder(x) \ge \alpha\big\}$.   
\end{definition}
\begin{lemma}[Some properties of $S_f$]\label{SFPROPSlem}
We first give the values of $S_f$ on the\break ``vertical axis'', the hypograph of $\funder$ and the graph of $\funder$:
\begin{equation}\label{Axis}
\All\ \alpha \in \RR,\ S_f(0,\alpha) = \alpha^-.
\end{equation}
\begin{equation}\label{Hypograph}
\All\ (x,\alpha) \in \hyp\,\funder,\ S_f(x,\alpha) \ge 1.
\end{equation}
\begin{equation}\label{Graph}
\All\ x \in E,\ S_f\big(x,\funder(x)\big) = 1.
\end{equation}
We next prove that $S_f$ is {\em sublinear}, that is to say
\begin{gather}
S_f(0,0) = 0.\label{S(0)}\\
(x,\alpha) \in E \times \RR\hbox{ and }\lambda > 0 \qlr  S_f(\lambda x,\lambda\alpha) = \lambda S_f(x,\alpha).\label{Sposhom}\\
\noalign{\noindent and}
(x,\alpha)\hbox{ and }(y,\gamma) \in E \times \RR \lr  S_f(x,\alpha) +  S_f(y,\gamma) \ge  S_f(x + y,\alpha + \gamma).\label{Ssubadd}
\end{gather}
\end{lemma}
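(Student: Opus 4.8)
The plan is to read off all six assertions directly from the defining formula $S_f(x,\alpha)=\inf I(x,\alpha)$, where $I(x,\alpha)=\{\mu>0:f_x(\mu)<\alpha\}$ and $f_x(\mu)=\mu\funder(x/\mu)$, using only the properties of $f_x$ recorded in \eqref{not3} together with the convexity of $\funder$. The single fact I would isolate first is that, since $f_x$ is continuous, strictly decreasing, and tends to $-\infty$, the set $I(x,\alpha)$ is exactly the interval $\,]S_f(x,\alpha),\infty[\,$; in particular every $\mu>S_f(x,\alpha)$ satisfies $f_x(\mu)<\alpha$, and this is what makes the rest routine.

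For the three ``location'' identities I would argue as follows. Since $f_0(\mu)=\mu\funder(0)=-\mu$, we have $I(0,\alpha)=\{\mu>0:\mu>-\alpha\}$, whose infimum is $\max(-\alpha,0)=\alpha^-$; this gives \eqref{Axis}, and \eqref{S(0)} is the case $\alpha=0$. For \eqref{Hypograph}, if $\funder(x)\ge\alpha$ then $f_x(1)=\funder(x)\ge\alpha$, and strict monotonicity of $f_x$ forces $I(x,\alpha)\subseteq\,]1,\infty[\,$, hence $S_f(x,\alpha)\ge1$. For \eqref{Graph}, applying \eqref{Hypograph} with $\alpha=\funder(x)$ gives $S_f(x,\funder(x))\ge1$, while $f_x(\mu)<f_x(1)=\funder(x)$ for every $\mu>1$ gives $\,]1,\infty[\,\subseteq I(x,\funder(x))$ and hence $S_f(x,\funder(x))\le1$.

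Positive homogeneity \eqref{Sposhom} is a change of variables: for $\lambda>0$, the substitution $\mu=\lambda\nu$ gives $f_{\lambda x}(\lambda\nu)=\lambda\nu\,\funder(x/\nu)=\lambda f_x(\nu)$, so $I(\lambda x,\lambda\alpha)=\lambda\,I(x,\alpha)$ and $S_f(\lambda x,\lambda\alpha)=\lambda S_f(x,\alpha)$.

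The only real work is subadditivity \eqref{Ssubadd}, and I expect the boundary case where $S_f(x,\alpha)$ or $S_f(y,\gamma)$ vanishes to be the one spot needing care. The natural move is to write $(x+y)/(s+t)$ as the convex combination $\tfrac{s}{s+t}\cdot\tfrac{x}{s}+\tfrac{t}{s+t}\cdot\tfrac{y}{t}$ (with $s=S_f(x,\alpha)$, $t=S_f(y,\gamma)$) and invoke convexity of $\funder$, but this is meaningless if $s$ or $t$ is $0$. So instead I would fix arbitrary $s'>s$ and $t'>t$; then $s',t'>0$ and, by the fact isolated in the first paragraph, $f_x(s')<\alpha$ and $f_y(t')<\gamma$. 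From the convex combination
\[
\frac{x+y}{s'+t'}=\frac{s'}{s'+t'}\cdot\frac{x}{s'}+\frac{t'}{s'+t'}\cdot\frac{y}{t'}
\]
convexity of $\funder$ and multiplication by $s'+t'>0$ give $f_{x+y}(s'+t')\le f_x(s')+f_y(t')<\alpha+\gamma$, so $s'+t'\in I(x+y,\alpha+\gamma)$ and therefore $S_f(x+y,\alpha+\gamma)\le s'+t'$. Letting $s'\downarrow s$ and $t'\downarrow t$ yields $S_f(x+y,\alpha+\gamma)\le s+t$, which is \eqref{Ssubadd}. Conceptually this is nothing but the remark that the perspective-type map $(\mu,x)\mapsto\mu\funder(x/\mu)$ is positively homogeneous and convex, hence subadditive, on $\,]0,\infty[\,\times E$; the perturbation to $s',t'$ is exactly what keeps the argument inside the open half-space $\mu>0$ where that map lives.
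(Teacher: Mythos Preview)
Your proof is correct and follows essentially the same route as the paper: the computations for \eqref{Axis}, \eqref{S(0)}, \eqref{Hypograph}, and \eqref{Sposhom} are identical, and your subadditivity argument---picking $s'>S_f(x,\alpha)$, $t'>S_f(y,\gamma)$, using convexity of $\funder$ on the convex combination, and passing to the limit---is exactly the paper's ``take $\mu\in I(x,\alpha)$, $\nu\in I(y,\gamma)$ and infimize'' argument once one observes (as you did) that $I(x,\alpha)=\,]S_f(x,\alpha),\infty[\,$. The only cosmetic difference is \eqref{Graph}: the paper gets $S_f(x,\funder(x))\le1$ by invoking the uniqueness clause of \eqref{Implicit}, whereas you argue it directly from strict monotonicity of $f_x$; both are one-line observations.
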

\begin{proof}
\eqref{Axis} follows from the observation that, for all $\mu > 0$, $f_0(\mu) := -\mu$, and so $I(0,\alpha) = \{\mu > 0\colon\ -\mu < \alpha\} = \,]\alpha^-,\infty[\,$.
\par
If $S_f(x,\alpha) < 1$ then $1 \in I(x,\alpha)$ and so $\funder(x) = f_x(1) < \alpha$.   \eqref{Hypograph} follows from this.
\par
\eqref{Graph} is immediate from \eqref{Hypograph} and the ``uniqueness'' in  \eqref{Implicit}.   
\par
\eqref{S(0)} is immediate from \eqref{Axis}.
\par
From \Lem{SFlem}(b), $I(\lambda x,\lambda\alpha) := \{\mu > 0\colon\ \funder(\lambda x/\mu) < \lambda\alpha/\mu\}$.   Setting $\nu = \mu/\lambda$, so that $\mu = \lambda\nu$, $I(\lambda x,\lambda\alpha) := \{\lambda\nu\colon \nu > 0,\ \funder(x/\nu) < \alpha/\nu\} = \lambda I(x,\alpha)$.   \eqref{Sposhom} follows by taking the infima of both sides.
\par
Let $\mu \in I(x,\alpha)$ and $\nu \in I(y,\gamma)$.   Then $\mu \funder(x/\mu) < \alpha$ and $\nu \funder(y/\nu) < \gamma$.   Thus
$$\funder\bigg(\frac{x + y}{\mu + \nu}\bigg) = \funder\bigg(\frac{\mu(x/\mu) + \nu(y/\nu)}{\mu + \nu}\bigg) \le \frac\mu{\mu + \nu}\funder\bigg(\frac{x}{\mu}\bigg) + \frac\nu{\mu + \nu}\funder\bigg(\frac{y}{\nu}\bigg)$$
$$< \frac\mu{\mu + \nu}\frac{\alpha}{\mu} + \frac\nu{\mu + \nu}\frac{\gamma}{\nu} = \frac{\alpha + \gamma}{\mu + \nu}.$$
Consequently, $\mu + \nu \in I(x + y,\alpha + \gamma)$, and so $\mu + \nu \ge S_f(x + y,\alpha + \gamma)$.   \eqref{Ssubadd} now follows by taking the infimum over $\mu$ and $\nu$.
\end{proof}
We now come to the main result of this section.
\begin{theorem}[The existence of affine maps]\label{AFFthm}
Let $\emptyset \ne B \subset E \times \RR$,
\begin{equation}\label{AFF1}
\infn_{(b,\beta) \in B}\big[f(b) + \beta\big] \in \RR,
\end{equation}
and, for all $(b_1,\beta_1),(b_2,\beta_2) \in B$, there exists $(b,\beta) \in B$ such that
\begin{equation}\label{AFF3}
\all\ \rho \ge 0,\ f\big(\rho\big[b - \half b_1 - \half b_2\big]\big) + \rho\big[\beta - \half\beta_1 - \half\beta_2\big] \le f(0).
\end{equation}
Then there exists an affine map $A\colon\ E \to \RR$ such that $A \le f$ on $E$ and
$$\infn_{(b,\beta) \in B}\big[A(b) + \beta\big] = \infn_{(b,\beta) \in B}\big[f(b)  + \beta\big].$$
\end{theorem}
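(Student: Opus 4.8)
The plan is to apply the Mazur--Orlicz--K\"onig theorem (\Lem{MOKlem}) to the sublinear functional $S_f$ on $E \times \RR$ (sublinear by \Lem{SFPROPSlem}) and to recover the affine map from the resulting linear functional via the implicit description of $S_f$ in \eqref{Implicit}. Before doing this I would make a harmless normalization. Writing $\mu_0$ for the (finite, by \eqref{AFF1}) number $\infn_{(b,\beta) \in B}\big[f(b)+\beta\big]$, I would replace each $(b,\beta) \in B$ by $\big(b,\,\beta + f(0)+1-\mu_0\big)$. Such a translation of the second coordinates leaves both \eqref{AFF1} and \eqref{AFF3} valid --- in \eqref{AFF3} the combination $\beta - \half\beta_1 - \half\beta_2$ is unaffected --- and reduces the theorem to the case $\infn_{(b,\beta) \in B}\big[f(b)+\beta\big] = f(0)+1$. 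In this case each $(b,\beta) \in B$ satisfies $f(b)+\beta \ge f(0)+1$, i.e. $(b,-\beta) \in \hyp\funder$, so $S_f(b,-\beta) \ge 1$ by \eqref{Hypograph}.

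Next I would put $D := \{(b,-\beta)\colon (b,\beta) \in B\}$, which is nonempty, and check the hypothesis of \Lem{MOKlem} for $S_f$ and $D$. Given $(b_1,-\beta_1),(b_2,-\beta_2) \in D$, choose $(b,\beta) \in B$ as in \eqref{AFF3}; then $(b,-\beta) - \half(b_1,-\beta_1) - \half(b_2,-\beta_2)$ equals $\big(b - \half b_1 - \half b_2,\ -[\beta - \half\beta_1 - \half\beta_2]\big)$, and by \eqref{Negative} (applied with $\alpha = -[\beta - \half\beta_1 - \half\beta_2]$) the value of $S_f$ at this point is $\le 0$, this being exactly the content of \eqref{AFF3}. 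Hence \Lem{MOKlem} yields a linear $\Lambda\colon E \times \RR \to \RR$ with $\Lambda \le S_f$ on $E \times \RR$ and $\infn_D \Lambda = \infn_D S_f$; write $\Lambda(x,\alpha) = L(x) + c\alpha$ with $L\colon E \to \RR$ linear and $c \in \RR$.

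Now I would pin down $c$. From \eqref{Axis}, $c = \Lambda(0,1) \le S_f(0,1) = 0$. Also $c \ne 0$: if $c = 0$ then \eqref{Graph} gives $L(x) = \Lambda\big(x,\funder(x)\big) \le S_f\big(x,\funder(x)\big) = 1$ for every $x \in E$, which forces the linear functional $L$ to vanish, hence $\Lambda \equiv 0$ and $\infn_D S_f = 0$, contradicting $S_f \ge 1$ on the nonempty set $D$. So $c < 0$. Then \eqref{Graph} and $\Lambda \le S_f$ give $L(x) + c\funder(x) \le 1$ for all $x$, so $\funder(x) \ge \big(1 - L(x)\big)/c$, and, since $\funder = f - f(0) - 1$, the affine map $A(x) := f(0)+1+\big(1-L(x)\big)/c$ satisfies $A \le f$ on $E$. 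A brief computation gives $A(b)+\beta = f(0)+1+\big(1-\Lambda(b,-\beta)\big)/c$ for $(b,\beta) \in B$; since $c < 0$ the map $t \mapsto (1-t)/c$ is increasing, so $\infn_{(b,\beta)\in B}\big[A(b)+\beta\big] = f(0)+1+\big(1 - \infn_D\Lambda\big)/c = f(0)+1+\big(1 - \infn_D S_f\big)/c$.

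Everything therefore reduces to showing $\infn_D S_f = 1$, the inequality $\ge 1$ being already in hand. For the reverse I would take $(b_n,\beta_n) \in B$ with $f(b_n)+\beta_n \to f(0)+1$, equivalently $\funder(b_n)+\beta_n \to 0$, and set $\sigma_n := S_f(b_n,-\beta_n) \ge 1$; by \eqref{Implicit}, $\sigma_n\funder(b_n/\sigma_n) = -\beta_n$. Feeding $b_n/\sigma_n = \frac1{\sigma_n}b_n + \big(1 - \frac1{\sigma_n}\big)0$ into the convexity of $\funder$ and using $\funder(0) = -1$ gives $\sigma_n \le \funder(b_n)+\beta_n+1 \to 1$; since $\infn_D S_f \le \sigma_n$ for every $n$, this forces $\infn_D S_f = 1$, and then $\infn_{(b,\beta)\in B}\big[A(b)+\beta\big] = f(0)+1 = \infn_{(b,\beta)\in B}\big[f(b)+\beta\big]$, as desired. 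I expect this last step to be the crux: it is where the implicit equation \eqref{Implicit} and the convexity of $\funder$ enter essentially, and it also explains the need for the normalization --- without it one only gets $c \le 0$, and the degenerate case $c = 0$ genuinely occurs.
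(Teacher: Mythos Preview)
Your proof is correct and follows the paper's strategy almost exactly: shift the second coordinates so that the resulting set $D$ lies in $\hyp\funder$ (the paper carries the constant $\delta$ instead of normalizing, which amounts to the same thing), apply \Lem{MOKlem} to $S_f$, rule out the degenerate case for the $\RR$--component of the linear functional using \eqref{Axis} and \eqref{Graph}, and read off the affine map.

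The one substantive difference is your final step. You establish the exact equality $\infn_D S_f = 1$ by a limiting argument using \eqref{Implicit} and convexity of $\funder$, and you call this ``the crux''. The paper does not need this at all: once $A \le f$ is in hand, one direction of the infimum equality is immediate, and the other follows from $\infn_D \Lambda \ge 1$ alone (in your notation, $\infn_D\Lambda \ge 1$ together with $c<0$ already gives $(1-\infn_D\Lambda)/c \ge 0$, hence $\infn_{(b,\beta)\in B}[A(b)+\beta] \ge f(0)+1$). So your extra computation is correct but unnecessary; the paper's route is shorter and uses only the lower bound $\infn_D S_f \ge 1$ from \eqref{Hypograph}.
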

\begin{proof}
Let $\delta := \infn_{(b,\beta) \in B}\big[f(b) + \beta\big] \in \RR$.
For all $(b,\beta) \in B$, $f(b) + \beta \ge \delta$, and so $(b,\delta - \beta - f(0) - 1) \in \hyp\funder$.   Let $D := \big\{(b,\delta - \beta - f(0) - 1)\big\}_{(b,\beta) \in B}$.   Thus $D \subset \hyp\funder$, and so \eqref{Hypograph} implies that $\inf_DS_f \ge 1$.   From \eqref{AFF3} and \eqref{Negative},
for all $(b_1,\beta_1),(b_2,\beta_2) \in B$, there exists $(b,\beta) \in B$ such that
$$S_f\big(b - \half b_1 - \half b_2,\half\beta_1 + \half\beta_2 - \beta\big) \le 0,$$
which implies that
$$\all\ d_1,d_2 \in D,\hbox{ there exists }d \in D\hbox{ such that }S_f\big(d - \half d_1 - \half d_2\big) \le 0.$$
From \Lem{MOKlem}, with $S = S_f$ and $E$ replaced by $E \times \RR$, there exists a linear map $L\colon\ E \times \RR \to \RR$ such that $L \le S_f$ on $E \times \RR$ and $\infn_DL = \infn_DS_f \ge 1$.   From algebraic considerations, there exist a linear map $\Lambda\colon E \to \RR$ and $\lambda  \in \RR$ such that, for all $(x,\alpha) \in E \times \RR$, $L(x,\alpha) = \Lambda(x) - \lambda\alpha$.   Thus, from \eqref{Graph},     
\begin{gather}
\all\ x \in E,\quad \Lambda(x) - \lambda \funder(x) = L\big(x,\funder(x)\big) \le S_f\big(x,\funder(x)\big) = 1,\label{AFF4}\\
\noalign{\noindent and, for all $(b,\eta) \in D$,\quad$\Lambda(b) - \lambda\eta = L\big(b,\eta\big) \ge \infn_DL \ge 1$, from which}
\all\ (b,\beta) \in B,\quad \Lambda(b) - 1 + \lambda(f(0) + 1 + \beta) \ge \lambda\delta.\label{AFF5}
\end{gather}
From \eqref{Axis}, $-\lambda = \Lambda(0) - \lambda 1 = L(0,1) \le S_f(0,1) = 0$, so $\lambda \ge 0$.   Now, if we had $\lambda = 0$ then, from \eqref{AFF4}, for all $x \in E$, $\Lambda(x) \le 1$.   Consequently, $\Lambda = 0$, which would contradict \eqref{AFF5}.   Thus $\lambda > 0$.   We write $A$ for the affine function $\Lambda/\lambda - 1/\lambda  + f(0) + 1$.   If we divide \eqref{AFF4} by $\lambda$ and rearrange the terms we see that, for all $x \in E$, $(\Lambda/\lambda)(x) - 1/\lambda \le \funder(x)$, from which $A(x) \le \funder(x) + f(0) + 1 = f(x)$.   Thus $A \le f$ on $E$.   Consequently,
\begin{equation}\label{AFF6}
\infn_{(b,\beta) \in B}\big[A(b) + \beta\big] \le \infn_{(b,\beta) \in B}\big[f(b)  + \beta\big].
\end{equation} 
Dividing \eqref{AFF5} by $\lambda$, for all $(b,\beta) \in B$, $(\Lambda/\lambda)(b) - 1/\lambda + f(0) + 1 + \beta \ge \delta$.   Thus $A(b) + \beta \ge \delta$.   Consequently,
$$\infn_{(b,\beta) \in B}\big[A(b) + \beta\big] \ge \delta = \infn_{(b,\beta) \in B}\big[f(b) + \beta\big].$$
The required result follows by combining this with \eqref{AFF6}.
\end{proof}
\begin{remark}
Another way of seeing the sublinearity of $S_f$ is to note that $S_f$ is the {\em Minkowski functional} of the strict epigraph of $\funder$.
\end{remark}
\section{Sun Chuanfeng's theorem}\label{SUNsec}
\begin{lemma}\label{Affine}
Let $E$ be a  vector space, $f\colon\ E \to \RR$ be convex and $x \in E$.   Then there exists an affine map $A\colon\ E \to \RR$ such that $A \le f$ on $E$ and $A(x) = f(x)$.   
\end{lemma}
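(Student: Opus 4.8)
The plan is to obtain this as an immediate consequence of \Thm{AFFthm} by taking the set $B$ there to be a one-point set that forces the infimum to be attained at $x$. Concretely, I would set $B := \big\{(x,-f(x))\big\} \subset E \times \RR$, which is nonempty, and check that it satisfies the two hypotheses of \Thm{AFFthm}.

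For hypothesis \eqref{AFF1}, since $B$ has a single element we get $\infn_{(b,\beta) \in B}\big[f(b) + \beta\big] = f(x) - f(x) = 0 \in \RR$. For hypothesis \eqref{AFF3}, the only available choice of $(b_1,\beta_1)$, $(b_2,\beta_2)$ and $(b,\beta)$ is $(x,-f(x))$ in each instance, so that $b - \half b_1 - \half b_2 = 0$ and $\beta - \half\beta_1 - \half\beta_2 = 0$; hence for every $\rho \ge 0$ the left-hand side of \eqref{AFF3} is exactly $f(0)$, and the required inequality holds (with equality).

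Having verified both hypotheses, \Thm{AFFthm} supplies an affine map $A\colon E \to \RR$ with $A \le f$ on $E$ and $\infn_{(b,\beta) \in B}\big[A(b) + \beta\big] = \infn_{(b,\beta) \in B}\big[f(b) + \beta\big]$. Evaluating both singleton infima, this equality reads $A(x) - f(x) = f(x) - f(x) = 0$, i.e.\ $A(x) = f(x)$. Combined with $A \le f$ on $E$, this is precisely the assertion of the lemma.

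I do not expect a genuine obstacle here: all the substantive work has already been absorbed into \Thm{AFFthm}, and what remains is the routine bookkeeping of confirming that a one-point $B$ meets its hypotheses. The only point deserving a moment's attention is \eqref{AFF3}, where one should observe that both $b - \half b_1 - \half b_2$ and $\beta - \half\beta_1 - \half\beta_2$ vanish simply because all three points of $B$ coincide, so that neither convexity of $f$ nor any estimate is needed at that step. (Alternatively, one could prove the lemma from scratch via the directional derivative $y \mapsto f'(x;y)$, which is sublinear since $f$ is real-valued and convex, then dominate it by a linear functional using the Hahn--Banach/Mazur--Orlicz machinery and translate; but the argument above is shorter given what is already available.)
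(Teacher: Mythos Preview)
Your proof is correct and follows essentially the same route as the paper: both apply \Thm{AFFthm} to a one-point set $B$, verifying \eqref{AFF1} and \eqref{AFF3} trivially since the differences $b - \half b_1 - \half b_2$ and $\beta - \half\beta_1 - \half\beta_2$ vanish. The only (immaterial) difference is that the paper takes $B = \{(x,0)\}$ rather than your $B = \{(x,-f(x))\}$, so its singleton infimum equals $f(x)$ instead of $0$.
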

\begin{proof}
Let $B = \big\{(x,0)\big\}$.   Then $\infn_{(b,\beta) \in B}\big[f(b) + \beta\big] = f(x)$ and, for all $\rho \ge 0$, $f\big(\rho[0]\big) + \rho[0] = f(0).$  
The result follows from \Thm{AFFthm}.      
\end{proof}
\begin{remark}
\Lem{Affine} says that the algebraic subdifferential of $f$ at $x$ is nonempty.   This can also be deduced by applying the Hahn--Banach theorem to the sublinear functional introduced in \cite[Theorem~2.1.13, pp.\ 55--56]{ZBOOK}.
\end{remark}
\begin{theorem}[A generalization of Sun Chuanfeng's theorem]\label{SUNthm}
Let $E$ be a  vector space, $f\colon\ E \to \RR$ be convex, $\emptyset \ne Z\subset E$ and, for all $z_1,z_2 \in Z$, there exists $z \in Z$ such that  
\begin{equation}\label{SUN1}
\all\ \rho \ge 0,\ f\big(\rho\big[z - \half z_1 - \half z_2\big]\big) \le f(0).\end{equation}
Then there exists an affine map $A\colon\ E \to \RR$ such that $A \le f$ on $E$ and\break $\inf_ZA = \inf_Zf$.   
\end{theorem}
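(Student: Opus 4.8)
The plan is to deduce \Thm{SUNthm} directly from \Thm{AFFthm} by taking $B := \{(z,0)\colon\ z \in Z\} = Z \times \{0\}$, after disposing of one degenerate case.

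First I would handle the case $\inf_Zf = -\infty$ separately. In that situation \Lem{Affine}, applied at (say) the point $0$, already produces an affine map $A\colon E \to \RR$ with $A \le f$ on $E$; since $A \le f$ on $E$ forces $\inf_ZA \le \inf_Zf = -\infty$, we get $\inf_ZA = -\infty = \inf_Zf$ with no further work, and the theorem holds in this case.

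So I may assume $\inf_Zf \in \RR$ (it is automatically $< \infty$, since $Z \ne \emptyset$ and $f$ is real--valued, so the only thing being excluded is the value $-\infty$). Now set $B := \{(z,0)\colon\ z \in Z\}$. Then $B \ne \emptyset$ and $\infn_{(b,\beta) \in B}\big[f(b) + \beta\big] = \inf_Zf \in \RR$, which is \eqref{AFF1}. For \eqref{AFF3}, take $(b_1,\beta_1) = (z_1,0)$ and $(b_2,\beta_2) = (z_2,0)$ in $B$; the hypothesis \eqref{SUN1} of \Thm{SUNthm} supplies $z \in Z$ with $f\big(\rho[z - \half z_1 - \half z_2]\big) \le f(0)$ for all $\rho \ge 0$, and since all the $\beta$--terms vanish this says exactly that $(b,\beta) := (z,0) \in B$ satisfies \eqref{AFF3}. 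Then \Thm{AFFthm} yields an affine $A\colon E \to \RR$ with $A \le f$ on $E$ and $\infn_{(b,\beta) \in B}\big[A(b) + \beta\big] = \infn_{(b,\beta) \in B}\big[f(b) + \beta\big]$, i.e.\ $\inf_ZA = \inf_Zf$, as required.

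The only genuine subtlety is that \Thm{AFFthm} requires the finiteness condition \eqref{AFF1}, so the case $\inf_Zf = -\infty$ must be routed through \Lem{Affine} rather than through \Thm{AFFthm}; once that is noted, the rest is just a transcription of the hypotheses, together with the trivial remark that the infimum over $B$ of $f(b) + \beta$ equals $\inf_Zf$ by the choice of $B$. I do not expect any real obstacle beyond getting this bookkeeping right.
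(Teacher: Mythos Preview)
Your proposal is correct and follows essentially the same route as the paper: first dispose of the case $\inf_Zf = -\infty$ via \Lem{Affine}, then set $B := Z \times \{0\}$ and read off \eqref{AFF1} and \eqref{AFF3} from the hypotheses before invoking \Thm{AFFthm}. The paper's proof is identical in structure and detail.
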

\begin{proof}
If $\inf_Zf = -\infty$ then the result is immediate from \Lem{Affine}, so we can and will suppose that $\inf_Zf \in \RR$.   Let $B := Z \times \{0\}$. \eqref{AFF1} follows since $\infn_{(b,\beta) \in B}\big[f(b) + \beta\big] = \inf_Zf$, and \eqref{AFF3} is immediate from \eqref{SUN1}.   \Thm{AFFthm}\break gives an affine map $A\colon\ E \to \RR$ such that $A \le f$ on $E$ and
$$\infn_{z \in Z}\big[A(z) + 0\big] = \infn_{z \in Z}\big[f(z) + 0\big].$$
The desired result follows since $\infn_{z \in Z}\big[A(z) + 0\big] = \inf_ZA$ and, as we have already observed, $\infn_{(b,\beta) \in B}\big[f(b) + \beta\big] = \inf_Zf$. 
\end{proof}
\begin{corollary}[Sun Chuanfeng's theorem]\label{SUNcor}
Let $E$ be a  vector space, $f\colon\ E \to \RR$ be convex, and $C$ be a nonempty convex {\em(or even midpoint convex)} subset of $E$.   Then there exists an affine function $A\colon\ E \to \RR$ such that $A \le f$ on $E$ and $\inf_CA = \inf_Cf$.   
\end{corollary}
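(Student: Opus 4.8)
The plan is to deduce this as the special case $Z = C$ of \Thm{SUNthm}. So the only thing to check is that a nonempty midpoint convex set $C$ satisfies the hypothesis \eqref{SUN1}.

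First I would take $z_1, z_2 \in C$ arbitrary and set $z := \half z_1 + \half z_2$. Since $C$ is midpoint convex, $z \in C$, and by construction $z - \half z_1 - \half z_2 = 0$. Hence, for every $\rho \ge 0$, $f\big(\rho\big[z - \half z_1 - \half z_2\big]\big) = f(0) \le f(0)$, so \eqref{SUN1} holds (with this choice of $z$). Note that ordinary convexity implies midpoint convexity, so the parenthetical strengthening in the statement costs nothing.

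Then I would simply invoke \Thm{SUNthm} with this $Z = C$ to obtain an affine map $A\colon E \to \RR$ with $A \le f$ on $E$ and $\inf_C A = \inf_C f$, which is exactly the assertion. There is essentially no obstacle here: the content is entirely in \Thm{SUNthm}, and this corollary is a one-line specialization once one observes that the midpoint of $z_1$ and $z_2$ is the witness that trivializes \eqref{SUN1}. If anything, the only point worth a second glance is confirming that midpoint convexity — rather than full convexity — genuinely suffices, which it does, since the verification above uses only that the single midpoint $\half z_1 + \half z_2$ lies in $C$.
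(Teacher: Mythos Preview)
Your proposal is correct and matches the paper's approach exactly: the paper's proof is simply ``This is immediate from \Thm{SUNthm},'' and you have spelled out the verification that \eqref{SUN1} holds by taking $z = \half z_1 + \half z_2$, which is precisely the intended (and only) step.
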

\begin{proof}
This is immediate from \Thm{SUNthm}. 
\end{proof}
\begin{remark}
In a certain sense, the analysis presented in this section is actually only ``half the story'' because $B \subset E \times \{0\}$.   We will discuss the ``whole story'' in \Sec{CAHBLsec}.   Specifically, \Thm{SUNthm} will be extended by \Thm{CAHBLthm}, and \Cor{SUNcor} will be extended by \Cor{CAHBLcor}.  
\end{remark}
\section{Results of Hahn--Banach--Lagrange type}\label{HBLsec}
The main result in this section is \Thm{ndim}, which represents a considerable generalization of the ``Hahn--Banach--Lagrange'' theorem, \cite[Theorem 1.11,\ p. 21]{HBM}.  \Thm{ndim} is an easy consequence of \Lem{MOKlem}.   \Cor{ndimaffine}--\ref{HBLcor} are simple special cases of \Thm{ndim}.   
 See \cite{HBM} for a discussion of the many consequences of \Cors{HBL} and \ref{HBLcor}.
\begin{theorem}\label{ndim}
For all $m = 1,\dots n$, let $E_m$ be a vector space and $S_m\colon\ E_m \to \RR$ be sublinear.   Let $Z$ be a nonempty set, for all $m = 1,\dots n$, $j_m\colon\ Z \to E_m$ and, for all $z_1,z_2 \in Z$, there exists $z \in Z$ such that
$$\tsum_{m = 1}^n S_m\big(j_m(z) - \half j_m(z_1) - \half j_m(z_2)\big) \le 0.$$
Then, for all $m = 1,\dots n$, there exists a linear map $L_m\colon\ E_m \to \RR$ such that $L_m \le S_m$ on $E_m$ and
\abovedisplayskip6pt\belowdisplayskip0pt
$$\infn_Z\tsum_{m = 1}^n\big[L_m\circ j_m\big] = \infn_Z\tsum_{m = 1}^n\big[S_m\circ j_m\big].$$
\end{theorem}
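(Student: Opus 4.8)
The plan is to reduce \Thm{ndim} to \Lem{MOKlem} by the ``bootstrapping'' device of gluing the $n$ spaces together into a single product space. First I would set $E := E_1 \times \dots \times E_n$, which is again a vector space (and nonzero, since we may assume at least one $E_m$ is nonzero — if all are zero the statement is trivial). On $E$ I would define $S\colon E \to \RR$ by $S(x_1,\dots,x_n) := \tsum_{m=1}^n S_m(x_m)$; a one-line check shows $S$ is sublinear, since a finite sum of sublinear functionals is sublinear (positive homogeneity and subadditivity both pass to finite sums). Then I would set $D := \big\{\big(j_1(z),\dots,j_n(z)\big)\colon z \in Z\big\} \subset E$, which is nonempty because $Z$ is.

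Next I would verify the hypothesis of \Lem{MOKlem} for $S$ and $D$. Given $d_1 = \big(j_1(z_1),\dots,j_n(z_1)\big)$ and $d_2 = \big(j_1(z_2),\dots,j_n(z_2)\big)$ in $D$, the hypothesis of \Thm{ndim} furnishes $z \in Z$ with $\tsum_{m=1}^n S_m\big(j_m(z) - \half j_m(z_1) - \half j_m(z_2)\big) \le 0$; putting $d := \big(j_1(z),\dots,j_n(z)\big) \in D$, the left-hand side is exactly $S\big(d - \half d_1 - \half d_2\big)$, since subtraction and scalar multiplication in $E$ are coordinatewise and $j_m$ is applied coordinate by coordinate. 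So \Lem{MOKlem} applies and yields a linear $L\colon E \to \RR$ with $L \le S$ on $E$ and $\infn_D L = \infn_D S$.

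It then remains to decompose $L$ back into its coordinates. By elementary linear algebra, writing $x = (x_1,\dots,x_n)$ as the sum of its ``coordinate embeddings'' and using linearity of $L$, there exist linear maps $L_m\colon E_m \to \RR$ (namely $L_m(x_m) := L(0,\dots,x_m,\dots,0)$) with $L(x_1,\dots,x_n) = \tsum_{m=1}^n L_m(x_m)$. To see $L_m \le S_m$ on $E_m$: fix $m$ and $x_m \in E_m$; applying $L \le S$ to the vector that is $x_m$ in coordinate $m$ and $0$ elsewhere gives $L_m(x_m) \le S_m(x_m) + \tsum_{k \ne m} S_k(0) = S_m(x_m)$, since $S_k(0) = 0$ for sublinear $S_k$. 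Finally, for $z \in Z$ the point $\big(j_1(z),\dots,j_n(z)\big) \in D$ has $L$-value $\tsum_{m=1}^n L_m\big(j_m(z)\big)$ and $S$-value $\tsum_{m=1}^n S_m\big(j_m(z)\big)$, so $\infn_D L = \infn_D S$ translates directly into $\infn_Z \tsum_{m=1}^n \big[L_m \circ j_m\big] = \infn_Z \tsum_{m=1}^n \big[S_m \circ j_m\big]$, which is the desired conclusion.

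None of the steps is genuinely hard; the only point requiring a little care is the bookkeeping of the identification $E \times \RR \leftrightarrow$ product spaces and making sure the coordinatewise evaluation of $j_m$ and of the vector-space operations line up so that $S\big(d - \half d_1 - \half d_2\big)$ is literally the sum appearing in the hypothesis. If I had to name the ``main obstacle'' it would be purely expository: stating cleanly that a finite sum of sublinear functionals on a product is sublinear, and that any linear functional on a finite product splits as a sum of linear functionals on the factors — both of which are standard and could simply be asserted.
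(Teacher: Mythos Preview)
Your proposal is correct and follows essentially the same route as the paper: form the product space $E_1\times\cdots\times E_n$, take $S$ to be the sum of the $S_m$, set $D$ to be the image of $Z$ under $(j_1,\dots,j_n)$, apply \Lem{MOKlem}, and then split the resulting linear $L$ into its coordinate functionals $L_m$, checking $L_m\le S_m$ by evaluating at vectors supported in a single coordinate. The only (harmless) slip is the stray reference to ``$E\times\RR$'' in your final paragraph, which plays no role here.
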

\begin{proof}
Define $j\colon\ Z \to E_1 \times \cdots \times E_n$ by $j(z) := \big(j_1(z),\dots,j_m(z)\big)$, and define the sublinear map $S\colon\ E_1 \times \cdots \times E_n \to \RR$ by $S(x_1,\dots,x_n) := \sum_{m = 1}^n S_m(x_m)$.    Then the conditions of \Lem{MOKlem} are satisfied with $E := E_1 \times \cdots \times E_n$ and $D := j(Z)$.   From \Lem{MOKlem}, there exists a linear map $L\colon\ E \to \RR$ so that $L \le S$ on $E$ and $\infn_DL = \infn_DS$.   For all $m = 1,\dots, n$, there exists a linear map $L_m\colon\ E_m \to \RR$ such that, for all $(x_1,\dots,x_n) \in E_1 \times \cdots \times E_n$, $L(x_1,\dots,x_n) = \tsum_{m = 1}^nL_m(x_m)$.   Now let $1 \le m \le n$, $y \in E_m$ and $w = (0,\dots,y,\dots,0)$, where the ``$y$'' is in the $m^{\rm{th}}$ place.   Then $L_m(y) = L(w) \le S(w) = S_m(y)$, from which $L_m \le S_m$ on $E_m$.  The result follows since $\infn_Z\sum_{m = 1}^n\big[L_m\circ j_m\big] = \infn_ZL\circ j = \infn_DL$ and $\infn_Z\sum_{m = 1}^n\big[S_m\circ j_m\big] = \infn_ZS\circ j = \infn_DS$. 
\end{proof}
\begin{corollary}\label{ndimaffine}
For all $m = 1,\dots n$, let $E_m$ be a vector space and $S_m\colon\ E_m \to \RR$ be sublinear.   Let $C$ be a nonempty convex subset of a vector space and, for all $m = 1,\dots n$, $j_m\colon\ C \to E_m$ be affine.   Then, for all $m = 1,\dots n$, there exists a linear map $L_m\colon\ E_m \to \RR$ such that $L_m \le S_m$ on $E_m$ and
\abovedisplayskip6pt\belowdisplayskip0pt
$$\infn_C\tsum_{m = 1}^n\big[L_m\circ j_m\big] = \infn_C\tsum_{m = 1}^n\big[S_m\circ j_m\big].$$
\end{corollary}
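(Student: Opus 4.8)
The plan is to derive \Cor{ndimaffine} as a direct special case of \Thm{ndim}, by checking that the midpoint-type hypothesis of the theorem is automatically satisfied when the index set is a convex set $C$ and each $j_m$ is affine. The key observation is that affineness of $j_m$ means $j_m\big(\half z_1 + \half z_2\big) = \half j_m(z_1) + \half j_m(z_2)$ for all $z_1,z_2 \in C$.

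First I would take $Z := C$, which is nonempty by hypothesis, and keep the same maps $j_m\colon C \to E_m$. Given $z_1,z_2 \in C$, I would set $z := \half z_1 + \half z_2$; since $C$ is convex, $z \in C$. Then for each $m$, affineness of $j_m$ gives $j_m(z) - \half j_m(z_1) - \half j_m(z_2) = 0$, so $S_m\big(j_m(z) - \half j_m(z_1) - \half j_m(z_2)\big) = S_m(0) = 0$ (a sublinear functional vanishes at $0$, since $S_m(0) = S_m(0 + 0) \le S_m(0) + S_m(0)$ and $S_m(0) = S_m(0\cdot 0) = 0\cdot S_m(0) = 0$ by positive homogeneity). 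Summing over $m = 1,\dots,n$ yields $\tsum_{m=1}^n S_m\big(j_m(z) - \half j_m(z_1) - \half j_m(z_2)\big) = 0 \le 0$, which is exactly the hypothesis of \Thm{ndim}.

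Having verified the hypothesis, I would simply invoke \Thm{ndim} with this $Z = C$ to obtain, for each $m$, a linear map $L_m\colon E_m \to \RR$ with $L_m \le S_m$ on $E_m$ and $\infn_C \tsum_{m=1}^n [L_m \circ j_m] = \infn_C \tsum_{m=1}^n [S_m \circ j_m]$, which is precisely the conclusion of \Cor{ndimaffine}.

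There is essentially no obstacle here; the only thing to be a little careful about is that midpoint convexity of $C$ (rather than full convexity) already suffices, and that the argument never uses any property of $C$ beyond closure under midpoints and nonemptiness — so the statement could be phrased with "midpoint convex" just as in \Cor{SUNcor}. The proof is a two-line reduction once one notes that the affine maps $j_m$ convert midpoints to midpoints, collapsing the displayed sum to $\tsum_{m=1}^n S_m(0) = 0$.
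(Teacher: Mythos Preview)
Your proposal is correct and is exactly the approach the paper takes: the paper's proof consists of the single line ``This is immediate from \Thm{ndim},'' and your argument spells out precisely the verification that makes it immediate. Your side remark that midpoint convexity of $C$ already suffices is also accurate.
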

\begin{proof}
This is immediate from \Thm{ndim}.
\end{proof}
The original ``Hahn--Banach--Lagrange'' theorem appeared in (among other places)  \cite[Theorem 1.11,\ p. 21]{HBM}.   \Cor{HBL} below is a generalization of the generalization of this that appeared in \cite[Theorem 1.13,\ p. 22]{HBM}.    
\begin{corollary}\label{HBL}
Let $E$ be a vector space and $S\colon\ E \to \RR$ be sublinear.   Let $Z \ne \emptyset$,  $j\colon\ Z \to E$, $k\colon Z \to \RR$ and, whenever $z_1,z_2 \in Z$, there exists $z \in Z$ such that
$$S\big(j(z) - \half j(z_1) - \half j(z_2)\big) + k(z) - \half k(z_1) - \half k(z_2) \le 0.$$
Then there exists a linear map $L\colon\ E \to \RR$ such that $L \le S$ on $E$ and
\abovedisplayskip6pt\belowdisplayskip0pt
$$\infn_Z\big[L\circ j + k\big] = \infn_Z\big[S\circ j + k\big].$$
\end{corollary}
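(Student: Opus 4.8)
The plan is to obtain \Cor{HBL} as the case $n = 2$ of \Thm{ndim}, using the second factor to carry the scalar function $k$. Concretely, I would apply \Thm{ndim} with $E_1 := E$, $S_1 := S$, $j_1 := j$, and with $E_2 := \RR$, $S_2 :=$ the identity map $t \mapsto t$ of $\RR$ (which is sublinear, being linear), and $j_2 := k$. With these choices the hypothesis of \Thm{ndim}, which for $z_1,z_2 \in Z$ asks for some $z \in Z$ with
$$S_1\big(j_1(z) - \half j_1(z_1) - \half j_1(z_2)\big) + S_2\big(j_2(z) - \half j_2(z_1) - \half j_2(z_2)\big) \le 0,$$
is, since $S_2$ is the identity, exactly the hypothesis
$$S\big(j(z) - \half j(z_1) - \half j(z_2)\big) + k(z) - \half k(z_1) - \half k(z_2) \le 0$$
imposed in the corollary; so the hypotheses transfer verbatim. (The product $E_1 \times E_2 = E \times \RR$ is nonzero whatever $E$ is, so there is no issue about degenerate spaces.)

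\Thm{ndim} then yields linear maps $L_1\colon E \to \RR$ with $L_1 \le S_1 = S$ on $E$, and $L_2\colon\RR \to \RR$ with $L_2 \le S_2$ on $\RR$, for which
$$\infn_Z\big[L_1\circ j_1 + L_2\circ j_2\big] = \infn_Z\big[S_1\circ j_1 + S_2\circ j_2\big].$$
The one small thing to check — essentially the only nontrivial step — is that $L_2$ must be the identity. Indeed, a linear map $\RR \to \RR$ has the form $t \mapsto ct$ for some $c \in \RR$, and $ct \le t$ for all $t \in \RR$ forces $c \le 1$ (take $t > 0$) and $c \ge 1$ (take $t < 0$), so $c = 1$. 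Hence $L_2 \circ j_2 = k = S_2 \circ j_2$.

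Substituting this back and writing $L := L_1$, the displayed equality becomes $\infn_Z[L\circ j + k] = \infn_Z[S\circ j + k]$, with $L \le S$ on $E$, which is the assertion of \Cor{HBL}. I expect no real obstacle here: all the substance is already in \Thm{ndim} (hence ultimately in \Lem{MOKlem}), and the only genuinely new observation is the rigidity of linear self-maps of $\RR$ dominated by the identity, which pins $L_2$ down and converts the general two-space identity into the one-variable statement with the ``free'' scalar term $k$.
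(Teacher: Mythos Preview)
Your proposal is correct and follows essentially the same route as the paper: both apply \Thm{ndim} with $n=2$, $E_1=E$, $E_2=\RR$, $S_1=S$, $S_2=I_\RR$, $j_1=j$, $j_2=k$, and then observe that the resulting linear map on $\RR$ dominated by the identity must itself be the identity. You actually spell out the $c\le 1$, $c\ge 1$ argument for this last point, whereas the paper leaves it as ``easily seen''.
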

\begin{proof}
We apply \Thm{ndim} with $n = 2$, $E_1 = E$, $E_2 = \RR$, $S_1 = S$, $S_2 = I_\RR$, the identity map on $\RR$, $j_1 = j$ and $j_2 = k$.   Thus there exist a linear map $L\colon\ E \to \RR$ and a linear map $M\colon\ \RR \to \RR$ such that $L \le S$ on $E$, $M \le I_\RR$ on $\RR$ and $\infn_Z\big[L\circ j + M\circ k\big] = \infn_Z\big[S\circ j + M\circ k\big]$.  The result follows since, as is easily seen, $M = I_\RR$.    
\end{proof}
We have included \Cor{HBLcor} below so that it can be compared with the convex--affine result contained in \Cor{CAHBLcor}.
\begin{corollary}\label{HBLcor}
Let $E$ be a vector space and $S\colon\ E \to \RR$ be sublinear.   Let $C$ be a nonempty convex subset of a vector space,  $j\colon\ C \to E$ be affine and $k\colon C \to \RR$ be convex.   Then there exists a linear map $L\colon\ E \to \RR$ such that $L \le S$ on $E$ and
\abovedisplayskip0pt\belowdisplayskip0pt
$$\infn_C\big[L\circ j + k\big] = \infn_C\big[S\circ j + k\big].$$
\end{corollary}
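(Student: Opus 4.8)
The plan is to deduce this directly from \Cor{HBL} by taking $Z = C$, with the same $j$ and $k$, and checking the midpoint-type hypothesis of that corollary. First I would recall that sublinearity of $S$ includes $S(0) = 0$ (compare \eqref{S(0)}), so that the $S$-term below will vanish whenever its argument is $0$.

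So let $z_1,z_2 \in C$ be given, and set $z := \half z_1 + \half z_2$; this belongs to $C$ because $C$ is convex. Since $j$ is affine, $j(z) = \half j(z_1) + \half j(z_2)$, hence $j(z) - \half j(z_1) - \half j(z_2) = 0$ and therefore $S\big(j(z) - \half j(z_1) - \half j(z_2)\big) = S(0) = 0$. Since $k$ is convex, $k(z) \le \half k(z_1) + \half k(z_2)$, so $k(z) - \half k(z_1) - \half k(z_2) \le 0$. Adding the two, the expression in the hypothesis of \Cor{HBL} is $\le 0$. Thus \Cor{HBL} applies and yields a linear map $L\colon E \to \RR$ with $L \le S$ on $E$ and $\infn_Z\big[L\circ j + k\big] = \infn_Z\big[S\circ j + k\big]$, which, since $Z = C$, is precisely the asserted conclusion.

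I do not expect any real obstacle: \Cor{HBLcor} is a straightforward specialization of \Cor{HBL}, and the only thing that needs verification — the midpoint inequality — splits cleanly, the affineness of $j$ annihilating the $S$-contribution and the convexity of $k$ making the remaining scalar term nonpositive. The genuine work underlying this statement is already done in \Lem{MOKlem} and in its bootstrapped form \Thm{ndim}.
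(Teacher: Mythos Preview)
Your proof is correct and is exactly the intended specialization: the paper's own proof of \Cor{HBLcor} consists of the single sentence ``This is immediate from \Cor{HBL}.'' You have simply spelled out why it is immediate, verifying the midpoint hypothesis via the affineness of $j$ (which kills the $S$-term) and the convexity of $k$.
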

\begin{proof}
This is immediate from \Cor{HBL}.
\end{proof}
\section{A convex--affine result of Hahn--Banach--\\Lagrange type}\label{CAHBLsec}
Our next result is a convex--affine version of \Cor{HBL}, and its consequence, \Cor{CAHBLcor}, is a convex--affine version of \Cor{HBLcor}.  
\begin{theorem}[A convex--affine result of Hahn--Banach--Lagrange type]\label{CAHBLthm}
Let $E$ be a vector space, $f\colon\ E \to \RR$ be convex, $Z \ne \emptyset$, $j\colon Z \to E$, $k\colon Z \to \RR$ and, whenever $z_1,z_2 \in Z$, there exists $z \in Z$ such that, for all $\rho \ge 0$, 
\begin{equation}\label{CAHBL1}
f\big(\rho\big[j(z) - \half j(z_1) - \half j(z_2)\big]\big) + \rho\big[k(z) -\half k(z_1) - \half k(z_2)\big] \le f(0).
\end{equation} 
Then there exists an affine map $A\colon\ E \to \RR$ such that $A \le f$ on $E$ and
\abovedisplayskip3pt\belowdisplayskip0pt
$$\infn_Z\big[A\circ j + k\big] = \infn_Z\big[f\circ j + k\big].$$   
\end{theorem}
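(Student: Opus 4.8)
The plan is to deduce \Thm{CAHBLthm} from \Thm{AFFthm} in exactly the same way that \Cor{HBL} was deduced from \Thm{ndim}: by packaging the pair $(j,k)$ as a single map into $E\times\RR$ and observing that the hypothesis \eqref{CAHBL1} is precisely the hypothesis \eqref{AFF3} of \Thm{AFFthm} for the set $B$ obtained as the image of this map. Concretely, I would first dispose of the degenerate case: if $\infn_Z[f\circ j+k] = -\infty$, then \Lem{Affine} already furnishes an affine $A\le f$ on $E$ (take any point $x\in E$), and then $\infn_Z[A\circ j+k]\le\infn_Z[f\circ j+k]=-\infty$, so equality holds trivially. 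Hence we may assume $\infn_Z[f\circ j+k]\in\RR$.

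Next I would set $B:=\{(j(z),k(z))\colon z\in Z\}\subset E\times\RR$, which is nonempty since $Z\ne\emptyset$. Then $\infn_{(b,\beta)\in B}[f(b)+\beta] = \infn_{z\in Z}[f(j(z))+k(z)] = \infn_Z[f\circ j+k]\in\RR$, so \eqref{AFF1} holds. For \eqref{AFF3}: given $(b_1,\beta_1),(b_2,\beta_2)\in B$, write $b_i=j(z_i)$, $\beta_i=k(z_i)$; the hypothesis of \Thm{CAHBLthm} gives $z\in Z$ such that \eqref{CAHBL1} holds for all $\rho\ge0$, and then $(b,\beta):=(j(z),k(z))\in B$ satisfies exactly the inequality required in \eqref{AFF3}, since $b-\half b_1-\half b_2 = j(z)-\half j(z_1)-\half j(z_2)$ and $\beta-\half\beta_1-\half\beta_2 = k(z)-\half k(z_1)-\half k(z_2)$. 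So \Thm{AFFthm} applies and yields an affine $A\colon E\to\RR$ with $A\le f$ on $E$ and $\infn_{(b,\beta)\in B}[A(b)+\beta] = \infn_{(b,\beta)\in B}[f(b)+\beta]$.

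Finally I would translate this conclusion back: $\infn_{(b,\beta)\in B}[A(b)+\beta] = \infn_{z\in Z}[A(j(z))+k(z)] = \infn_Z[A\circ j+k]$ and likewise $\infn_{(b,\beta)\in B}[f(b)+\beta] = \infn_Z[f\circ j+k]$, giving the desired equality $\infn_Z[A\circ j+k] = \infn_Z[f\circ j+k]$.

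I do not anticipate a serious obstacle here: the entire content is really a bookkeeping reduction, with \Thm{AFFthm} doing the work and the hypothesis \eqref{CAHBL1} having been evidently tailored to match \eqref{AFF3}. The only point needing a moment's care is the separate treatment of the case $\infn_Z[f\circ j+k]=-\infty$, since \Thm{AFFthm} requires \eqref{AFF1}, i.e.\ finiteness of the infimum; this is handled cleanly by \Lem{Affine} as above. One should also note in passing that $B$ may fail to be of the form $Z'\times\{0\}$ (unlike in \Sec{SUNsec}), which is exactly why \Thm{AFFthm} in its full generality, rather than \Thm{SUNthm}, is needed.
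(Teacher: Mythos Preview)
Your proposal is correct and follows essentially the same route as the paper: handle the case $\infn_Z[f\circ j+k]=-\infty$ via \Lem{Affine}, then set $B=\{(j(z),k(z))\}_{z\in Z}$, verify \eqref{AFF1} and \eqref{AFF3}, apply \Thm{AFFthm}, and translate the conclusion back to $Z$. The paper's proof is slightly terser but identical in substance.
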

\begin{proof}
If $\infn_Z\big[f\circ j + k\big] = -\infty$ then the result is immediate from \Lem{Affine}, so we can and will suppose that $\infn_Z\big[f\circ j + k\big] \in \RR$.   Let $B = \big\{\big(j(z),k(z)\big)\big\}_{z \in Z}$.   \eqref{AFF1} now follows since $\infn_{(b,\beta) \in B}\big[f(b) + \beta\big] = \infn_Z\big[f\circ j + k\big]$, and \eqref{AFF3} is immediate from \eqref{CAHBL1}. \Thm{AFFthm} now provides an affine map $A\colon\ E \to \RR$ such that $A \le f$ on $E$ and $\infn_{(b,\beta) \in B}\big[A(b) + \beta\big] = \infn_{(b,\beta) \in B}\big[f(b)  + \beta\big]$.   The desired result follows since $\infn_{(b,\beta) \in B}\big[A(b) + \beta\big] = \infn_Z\big[A\circ j + k\big]$ and, as was noted above, $\infn_{(b,\beta) \in B}\big[f(b) + \beta\big] = \infn_Z\big[f\circ j + k\big]$.
\end{proof}
\begin{corollary}\label{CAHBLcor}
Let $E$ be a vector space, $f\colon\ E \to \RR$ be convex, $C$ be a nonempty convex subset of a vector space, $j\colon C \to E$ be affine, and $k\colon C \to \RR$ be convex.   Then there exists an affine map $A\colon\ E \to \RR$ such that $A \le f$ on $E$ and
\abovedisplayskip3pt\belowdisplayskip0pt
$$\infn_C\big[A\circ j + k\big] = \infn_C\big[f\circ j + k\big].$$   
\end{corollary}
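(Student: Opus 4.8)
The plan is to read this off from \Thm{CAHBLthm} exactly as \Cor{SUNcor} was read off from \Thm{SUNthm} and \Cor{HBLcor} from \Cor{HBL}. I would set $Z := C$, keep the given maps $j$ and $k$, and verify the single combinatorial hypothesis of \Thm{CAHBLthm}: for each $z_1,z_2 \in C$, the existence of a $z \in C$ satisfying \eqref{CAHBL1}.

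The natural choice is the midpoint $z := \half z_1 + \half z_2$, which lies in $C$ because $C$ is convex (in fact midpoint convexity is all that is needed). With this choice, affineness of $j$ gives $j(z) - \half j(z_1) - \half j(z_2) = 0$, so the first summand on the left of \eqref{CAHBL1} equals $f(\rho\cdot 0) = f(0)$ for every $\rho \ge 0$. Convexity of $k$ gives $k(z) - \half k(z_1) - \half k(z_2) \le 0$, and since $\rho \ge 0$ the second summand $\rho\big[k(z) - \half k(z_1) - \half k(z_2)\big]$ is $\le 0$. Adding, the left side of \eqref{CAHBL1} is $\le f(0)$, which is exactly what is required.

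Having checked the hypothesis, \Thm{CAHBLthm} produces an affine map $A\colon E \to \RR$ with $A \le f$ on $E$ and $\infn_Z\big[A\circ j + k\big] = \infn_Z\big[f\circ j + k\big]$; since $Z = C$, this is precisely the asserted equality $\inf_C\big[A\circ j + k\big] = \inf_C\big[f\circ j + k\big]$.

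I do not expect any genuine obstacle: the substance is entirely in \Thm{CAHBLthm}. The only points to keep straight are that it is affineness of $j$ (not mere convexity) that collapses the $f$-term to the constant $f(0)$, and that it is convexity of $k$ together with the restriction $\rho \ge 0$ that forces the remaining, linear-in-$\rho$, term to be nonpositive — both being exactly the features that \eqref{CAHBL1} is built to exploit.

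\begin{proof}
If $\infn_C\big[f\circ j + k\big] = -\infty$ the result is immediate from \Lem{Affine}, so assume $\infn_C\big[f\circ j + k\big] \in \RR$. Apply \Thm{CAHBLthm} with $Z := C$ and the given $j$, $k$. To verify \eqref{CAHBL1}, let $z_1,z_2 \in C$ and put $z := \half z_1 + \half z_2 \in C$. Since $j$ is affine, $j(z) - \half j(z_1) - \half j(z_2) = 0$, so for all $\rho \ge 0$,
$$f\big(\rho\big[j(z) - \half j(z_1) - \half j(z_2)\big]\big) = f(0).$$
Since $k$ is convex, $k(z) - \half k(z_1) - \half k(z_2) \le 0$, hence $\rho\big[k(z) -\half k(z_1) - \half k(z_2)\big] \le 0$ for all $\rho \ge 0$. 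Adding these, the left side of \eqref{CAHBL1} is $\le f(0)$, as required. \Thm{CAHBLthm} now gives an affine map $A\colon E \to \RR$ with $A \le f$ on $E$ and $\infn_C\big[A\circ j + k\big] = \infn_C\big[f\circ j + k\big]$, which is the desired conclusion.
\end{proof}
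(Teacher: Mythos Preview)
Your proof is correct and follows exactly the route the paper intends: the paper's own proof is the single line ``This is immediate from \Thm{CAHBLthm},'' and you have simply spelled out the verification of \eqref{CAHBL1} via the midpoint $z = \half z_1 + \half z_2$. The preliminary case $\infn_C\big[f\circ j + k\big] = -\infty$ need not be split off, since \Thm{CAHBLthm} already absorbs it, but including it does no harm.
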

\begin{proof}
This is immediate from \Thm{CAHBLthm}.
\end{proof}
\begin{remark}\label{NOGENrem}
It is tempting to try to find an analog of \Thm{CAHBLthm} for $n \ge 3$ convex functions instead of 2 in the general spirit of \Thm{ndim}.   The problem is that the technique used in the proof of \Thm{ndim} of progressively setting all the values of $x_m$ other that one particular one to be 0 and using the fact that linear and sublinear maps vanish at 0 does not seem available in the convex--affine case.     
\end{remark}


%
\end{document}